\documentclass[12pt]{article}

\usepackage{amsfonts,amsmath,amssymb,latexsym,xcolor,mathrsfs,tikz,breqn,extarrows}
\usepackage{amsthm,authblk}
\usepackage{multirow}
\usepackage{mathtools}
\usepackage{calc}
\usepackage{pgfplots}
\pgfplotsset{compat=1.18}
\usetikzlibrary{patterns,arrows,decorations.pathreplacing}
\usepackage{diagbox}
\usepackage{makecell}
\usepackage{float}
\usepackage{enumitem}

\def\0{\emptyset}

\newtheorem{theorem}{Theorem}[section]

\newtheorem{lemma}[theorem]{Lemma}

\newtheorem{prop}[theorem]{Proposition}

\newcommand{\sat}{\mathrm{sat}}
\newcommand{\Sat}{\mathrm{Sat}}

\numberwithin{equation}{section}

\usepackage{algorithm}
\usepackage{algpseudocode}
\usepackage{graphicx}
\usepackage{pstricks,pgf,subcaption,amssymb}

\begin{document}
\title{\bf
A note on the saturation number for unions of three cliques
	 }
\author[1]{Hanlai Lin}
\author[2,3]{Zhen He}
\author[4]{Yiduo Xu\thanks{Corresponding author. E-mail:\texttt{xyd23@mails.tsinghua.edu.cn}}\;}

\affil[1]{\small Department of General Education, Wuxi University, Wuxi 214105, P. R. China}
\affil[2]{\small School of Mathematics and Statistics, Beijing Jiaotong University, Beijing 100044, P.R. China..}
\affil[3]{\small Beijing Key Laboratory of Biological Big Data and Topological Statistics, Beijing Jiaotong University, Beijing, 100044, P.R. China.}
\affil[4]{\small Department of Mathematical Sciences, Tsinghua University, Beijing 100084, P.R. China.}

\date{}

\maketitle\baselineskip 16.3pt

\begin{abstract}
A graph $G$ is $F$-saturated if $G$ contains no copy of $F$ but $G+e$ contains a copy of $F$ for every missing edge $e$ of $G$. 
The saturation number $\sat(n,F)$ is the minimum number of edges in an $n$-vertex $F$-saturated graph. 
Motivated by a problem posed by Faudree, Ferrara, Gould, and Jacobson concerning $K_p\cup K_q\cup K_{q+1}$, we determine the saturation number and the unique extremal graph for $K_p\cup K_q\cup K_r$ whenever $2\le p\le q<r<p+q$ and $n$ is sufficiently large. 

Together with the previously known results for $r\ge p+q$ and for $r=q$, this completes the determination of the saturation number and the extremal graphs for unions of three cliques, for all sufficiently large $n$.
\end {abstract}

{\bf Keywords.} saturation, complete graph, union of graphs

\section{Introduction}

All graphs in this paper are finite, simple, and undirected.
For a graph $G$, let $V(G)$, $E(G)$, $|G|$, $e(G)$ denote its vertex set, edge set, order and size, respectively.
For $v\in V(G)$, write $N_G(v)$ and $N_G[v]$ for its open and closed neighbourhoods, and write $d_G(v)=|N_G(v)|$. 
The minimum degree of $G$ is $\delta(G)=\min\{d_G(u):u\in V(G)\}$. 
We omit the subscript when the underlying graph is clear.
For two graphs $G_1,G_2$, let $G_1 \cup G_2$ denote the vertex-disjoint union of $G_1$ and $G_2$. Their join $G_1 \vee G_2$ is obtained from $G_1 \cup G_2$ by adding every edge between $V(G_1)$ and $V(G_2)$.
We write $I_m$ for an independent set of order $m$.

Given graphs $G$ and $F$, we say that $G$ is \textit{$F$-free} if $G$ contains no copy of $F$.
The graph $G$ is \textit{$F$-saturated} if $G$ is $F$-free but $G+e$ contains a copy of $F$
for every $e \in E(\overline{G})$.
The \textit{saturation number} of $F$ is denoted by
$$\sat(n,F) =\min \{ e(G):G  \; \text{is}  \; F\text{-saturated and } |G|=n  \} \,.$$
We denote the family of extremal graphs by
$$\Sat(n,F)=\{G: |G|=n,\ e(G)=\sat(n,F),\ \text{and }G\text{ is }F\text{-saturated}\}.$$

For $t\ge 1$ and $2\le p_1\le \cdots\le p_t$, define
$$H_t(n;p_1,p_2,...,p_t)\cong K_{p_1-2}\vee(K_{p_2+1}\cup...\cup K_{p_t+1}\cup I_{n-t+3-\sum_{i=1}^{t}p_i}).$$
The saturation number was introduced by Erd\H{o}s, Hajnal and Moon~\cite{EHM}, who proved that $H_1(n;p)$ is the unique extremal graph for $K_p$.
Saturation for vertex-disjoint unions of cliques was developed by K\'aszonyi and Tuza~\cite{KT}. They proved $H_t(n;2,2,...,2)$ is the unique extremal graph for $tK_2$.
Faudree, Ferrara, Gould and Jacobson~\cite{FFGJ} subsequently determined $H_t(n; p,p,\ldots,p)\in Sat(n,tK_p)$ and $H_2(n;p,q)$ is the unique extremal graph for $K_p\cup K_q$ for sufficiently large $n$ and asked whether $H_t(n; p,p,\ldots,p)$ is unique for $tK_p$. 
Zhu, Hao and He~\cite{ZHH} answered this question and proved the uniqueness for the extremal graph for $tK_p$.

Chen and Yuan~\cite{CY} proved that $H_t(n; p,q,\ldots,q)\in Sat(n,K_p\cup(t-1)K_q)$ and $H_3(n;p,q,r)$ is the unique extremal graph for $K_p\cup K_q\cup K_r$ when $2\le p\le q\le r$, $r\ge p+q$ and $n$ is sufficiently large. In the case $q<r<p+q$, however, $H_3(n;p,q,r)$ contains $K_p\cup K_q\cup K_r$ and hence is not saturated. More generally,  Li, Hao, He and Zhu~\cite{LiHHZ} proved that $H_t(n;p_1,p_2,...,p_t)$ is $K_{p_1}\cup...\cup K_{p_t}$-saturated if and only if, for every $2\le i\le t-1$, either $p_{i+1}-p_i\ge p_1$ or $p_{i+1}=p_{i}$. They also determined $H_4(n;p_1,p_2,p_3,p_4)$ is extremal for $K_{p_1}\cup...\cup K_{p_4}$, under certain parameter restrictions

Faudree, Ferrara, Gould and Jacobson~\cite{FFGJ} proposed further investigation of the smallest case when $H_t(n;p_1,$ $p_2,...,p_t)$ is not $K_{p_1}\cup...\cup K_{p_t}$-saturated and they asked the problem for the saturation number of $K_p\cup K_{q}\cup K_{q+1}$. 
Our main theorem provides a strengthened version of the question raised in~\cite{FFGJ} by determining the exact saturation number and the unique extremal graph of $K_p\cup K_q\cup K_r$ when $2\le p\le q< r<p+q$ and $n$ is large enough. 
Together with the results in~\cite{CY} for $r\ge p+q$ and $r=q$, this completes the determination of the saturation number and the extremal graphs for unions of three cliques when $n$ is sufficiently large.

\begin{theorem}\label{thm:main}
Let \(2\le p\le q<r<p+q\) and \(n>3(p-2)+(q+r)(q+r+1)\).
Then
\[
\sat(n,K_p\cup K_q\cup K_r)
=(p-2)(n-p+2)+\binom{p-2}{2}+\binom{q+r+1}{2}.
\]
Moreover,
$\Sat(n,K_p\cup K_q\cup K_r) =\{H_2(n,p,q+r)\}.$
\end{theorem}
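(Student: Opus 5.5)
The plan is to prove the upper bound by checking that $H_2(n;p,q+r)=K_{p-2}\vee(K_{q+r+1}\cup I_{n-p-q-r+1})$ is saturated, and the lower bound by a degree-counting argument that identifies a universal $K_{p-2}$, removes it, and reduces to a statement about a small dense part plus an independent set.

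\textbf{Upper bound.} I will first check that $H:=H_2(n;p,q+r)$ is $K_p\cup K_q\cup K_r$-saturated; its edge count is exactly the claimed formula.

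For freeness, let $A=V(K_{p-2})$, $B=V(K_{q+r+1})$ and $I$ the independent set. A clique containing a vertex of $I$ has at most $p-1$ vertices. Hence no $K_p$, $K_q$ or $K_r$ meets $I$, since $p\le q<r$. So all three cliques would sit inside the clique $A\cup B$ on $p+q+r-1$ vertices, which is impossible.

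For saturation there are two kinds of missing edge.
\begin{itemize}
\item If $uv$ is a missing edge with $u,v\in I$, then $A\cup\{u,v\}$ is a $K_p$. The set $B$ has $q+r+1$ vertices and supplies a disjoint $K_q\cup K_r$.
\item If the missing edge is $uw$ with $u\in I$ and $w\in B$, then $A\cup\{u,w\}$ is a $K_p$. The set $B\setminus\{w\}$ still has $q+r$ vertices and gives $K_q\cup K_r$.
\end{itemize}
This gives $\sat(n,K_p\cup K_q\cup K_r)\le e(H)$.

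\textbf{Lower bound.} Let $G$ be saturated on $n$ vertices with $e(G)\le e(H)$. I will show $G\cong H$.

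\emph{Step 1 (local facts).} $G$ is not complete, since a complete graph on $n$ vertices contains $K_p\cup K_q\cup K_r$. For any missing edge $uv$, the copy of $F=K_p\cup K_q\cup K_r$ in $G+uv$ uses $uv$ inside one of the three cliques. This gives two consequences.
\begin{itemize}
\item $u$ and $v$ have at least $p-2$ common neighbours, because $p$ is the smallest clique size. Hence $\delta(G)\ge p-2$.
\item $G$ contains two disjoint cliques whose sizes form one of the pairs $\{q,r\}$, $\{p,r\}$, $\{p,q\}$, vertex-disjoint from a $K_{s-2}$ in $N(u)\cap N(v)$ with $s\in\{p,q,r\}$.
\end{itemize}

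\emph{Step 2 (the universal $K_{p-2}$).} Let $L$ be the set of vertices of degree at most some constant $D$, say $D=q+r+p$, and $B=V\setminus L$. The edge bound gives $\sum d(v)\le 2(p-2)n+O(1)$. Together with $\delta\ge p-2$, this forces $|B|\le$ a constant (roughly $2(p-2)$) and almost all vertices to have degree exactly $p-2$.

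Take two nonadjacent vertices $u,v$ of degree $p-2$. If they exist, their common neighbourhood contains at least $p-2$ vertices, so $N(u)=N(v)$ and this set is a clique. Pigeonhole over the many degree-$(p-2)$ vertices, which is where $n>3(p-2)+(q+r)(q+r+1)$ enters, gives a single clique $A$ on $p-2$ vertices that is the common neighbourhood of a large set $I$ of pairwise nonadjacent vertices.

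Next, applying saturation to a pair in $I$ and to pairs $(x,y)$ with $x\in I$ shows that every vertex $y\notin A$ nonadjacent to $x$ must be adjacent to all of $A$. Hence $A$ is joined to everything.

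\emph{Step 3 (reduction).} Let $G'=G-A$. From the pair $u,v\in I$ we get a $K_q\cup K_r$ inside $G-A-\{u,v\}$, since $A\cup\{u,v\}$ must be the $K_p$. Also, $G'$ must satisfy a saturation-type condition: every missing edge $xy$ of $G'$ needs $G'+xy$ to contain a structure of total size $2+q+r$ relative to the $K_p$ built from $A\cup\{x,y\}$. The main count is then
\[
e(G)=(p-2)(n-p+2)+\binom{p-2}{2}+e(G').
\]
So it remains to show $e(G')\ge\binom{q+r+1}{2}$, with equality only for $K_{q+r+1}\cup I$.

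\emph{Step 4 (main obstacle).} This is where $r<p+q$ matters. Let $C$ be the union of the non-isolated components of $G'$. For $x\in I$ and any $y\in C$, the edge $xy$ is missing. So $C-y$ must contain $K_q\cup K_r$ for every $y$, or else the new edge creates a larger clique elsewhere.

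I would try to show that $C$ contains $K_{q+r+1}$ as follows.
\begin{itemize}
\item Take a $K_q\cup K_r$ in $C$ with clique vertex sets $Q$ and $R$.
\item Use $F$-freeness of $G$: $A$ together with $p$ suitably chosen vertices forms a $K_p$ avoiding a third clique. This should force every vertex outside $Q\cup R$ to see, roughly, at most $p-1$ vertices of $Q\cup R$.
\item Meanwhile, saturation for missing edges inside $Q\cup R$, or between $Q\cup R$ and an extra vertex, forces many adjacencies.
\end{itemize}
The inequality $r<p+q$ is exactly what excludes the competing configuration $K_{p+q}$-type structures used in the $r\ge p+q$ case: any extremal alternative built from a $K_r$ containing a $K_p\cup K_q$ would fail.

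If that structural step is too delicate, I would fall back on counting. Every $y\in C$ lies in all copies of $K_q\cup K_r$ in $C$ only if those copies cover $C$ robustly. So $C$ is covered by at least two distinct $K_q\cup K_r$ copies, and a minimum-edge analysis of such covers, using $q<r$, gives $e(C)\ge\binom{q+r+1}{2}$ with equality only when $C=K_{q+r+1}$.
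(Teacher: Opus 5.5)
Your overall architecture (find a universal $K_{p-2}$, then analyse the non-isolated part of $G-A$) matches the paper's. However, both load-bearing steps are missing. First, Step~2 never produces a vertex of degree $p-2$. The bound $\sum_v d(v)\le 2(p-2)n+O(1)$ is only twice the trivial bound $(p-2)n$ coming from $\delta\ge p-2$, so these two facts alone are satisfied by any $(2p-4)$-regular graph. They do not make almost all degrees equal to $p-2$, and they give only $|B|\le\bigl((p-2)n+O(1)\bigr)/(D-p+2)$, which is linear in $n$. What is needed is the Faudree--Ferrara--Gould--Jacobson count used in Lemma~\ref{lem:min-degree}. If $\delta\ge p-1$, take $v$ of minimum degree. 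Each $u\notin N[v]$ has at least $p-2$ neighbours in $N(v)$, and $N(v)$ contains a $K_{p-2}$. Summing degrees then gives $2e(G)\ge(2p-3)n-(p-2)(p+2)$, which exceeds $2e(G_0)$ as soon as $n>3(p-2)+(q+r)(q+r+1)$. This is where the hypothesis on $n$ is used, not in a pigeonhole. After that, a single vertex $v$ with $d(v)=p-2$ suffices, since $N(v)$ is automatically a clique joined to every other vertex.

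Second, and more seriously, Step~4 is only a sketch, and your fallback is false as stated. The property ``$C-y\supseteq K_q\cup K_r$ for every $y$'' does not imply $e(C)\ge\binom{q+r+1}{2}$. For $r=q+1$, the graph $C=2K_r$ has this property and has $r(r-1)<\binom{2r}{2}$ edges. It even contains no pairwise disjoint $K_2,K_q,K_r$, and $K_{p-2}\vee(2K_r\cup I)$ is $F$-free. It is excluded only by saturation: it is a proper spanning subgraph of $G_0$, and the edges between the two $K_r$'s create no copy. So no edge count on $C$ using only the edges from isolated vertices can work; you need structure plus maximality of $G$. The paper's route has three parts.
\begin{enumerate}
\item[(a)] Lemma~\ref{lem:core}: $H$ has no pairwise disjoint $K_2,K_q,K_r$, and $H-z\supseteq K_q\cup K_r$ for every $z$.
\item[(b)] Propositions~\ref{prop:qplus} and~\ref{prop:rplus}: if $H$ contains disjoint $K_{q+1},K_r$ or disjoint $K_q,K_{r+1}$, then every further vertex of $H$ would be isolated in $H$. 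Hence $|V(H)|=q+r+1$, $G\subseteq G_0$, and $G=G_0$. The hypothesis $r<p+q$ is used exactly here, to split $M$ into parts of sizes $p-(r-q)-1\ge0$ and $r-q-1$ when assembling the forbidden copy.
\item[(c)] Proposition~\ref{prop:nonexistence}: any graph satisfying (a) must contain one of the two configurations in (b). Fix a $K_r$, say $B\subseteq H$, and let $F_b$ be the non-neighbourhood of $b\in B$. The separation lemma (Lemma~\ref{lem:separation}) with $m=r$ and $k=h-q-r-1$ gives $\bigl|\bigcup_b F_b\bigr|\ge h-q$, contradicting $\bigl|\bigcup_b F_b\bigr|\le h-r$.
\end{enumerate}
Part (c) is the genuinely non-obvious idea, and nothing in your proposal plays the role of (b) or (c).
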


\section{Preliminaries}
Let $G_0=H_2(n,p,q+r)=K_{p-2}\vee\bigl(K_{q+r+1}\cup I_{n-p-q-r+1}\bigr).$
One checks that $G_0$ is $K_p\cup K_q\cup K_r$-saturated and
\[
e(G_0)
=(p-2)(n-p+2)+\binom{p-2}{2}+\binom{q+r+1}{2}.
\]
This provides the upper bound in Theorem~\ref{thm:main}.

We now state two lemmas that will be used in the proof of the lower bound.

\begin{lemma}\label{lem:min-degree}
Let $2\le p\le q<r$ and
\(
n>3(p-2)+(q+r)(q+r+1).
\)
If $G$ is $K_p\cup K_q\cup K_r$-saturated and
\begin{equation*}
e(G)\le
(p-2)(n-p+2)+\binom{p-2}{2}+\binom{q+r+1}{2}=e(G_0),
\end{equation*}
then $\delta(G)=p-2$.

Moreover, if $d(v)=p-2$ and $M=N(v)$, then $G[M]\cong K_{p-2}$ and the induced bipartite graph $G[M,V(G)\setminus M]$ is complete. In particular, $e(G[V(G)\setminus M]) \le \binom{q+r+1}{2}$.
\end{lemma}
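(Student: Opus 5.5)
We argue in three steps: a lower bound $\delta(G)\ge p-2$ from saturation, the reverse bound $\delta(G)\le p-2$ by edge counting, and then the structure of a minimum-degree vertex.

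\textbf{Lower bound $\delta(G)\ge p-2$.} Suppose some vertex $v$ has $d(v)\le p-3$. Since $n$ is large and $\delta(G)$ is small, $v$ has a non-neighbour $u$. Saturation gives a copy $F$ of $K_p\cup K_q\cup K_r$ in $G+uv$ that uses the edge $uv$. Then $uv$ lies in one of the three cliques, so $v$ has at least $p-1$ neighbours in $G+uv$. Hence $v$ has at least $p-2$ neighbours in $G$, a contradiction.

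\textbf{Upper bound $\delta(G)\le p-2$.} Suppose instead that $\delta(G)\ge p-1$. I would show this forces $e(G)>e(G_0)$. The key observation is that $G$ must contain many copies of $K_{p-1}$ on a bounded vertex set, and in fact a fairly rich structure. Take any non-edge $xy$ (one exists, since $G\ne K_n$ when $n$ is large). Then $G+xy$ contains $K_p\cup K_q\cup K_r$, so $G$ contains vertex-disjoint cliques $K_{p-1}$, $K_{q}$ and $K_{r}$ up to one edge each. Let $S$ be such a witness set, with $|S|\le p+q+r$.

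I expect the counting to go as follows. The degree sum gives $e(G)\ge (p-1)n/2$. This beats $(p-2)n$ only when $p$ is small, so the degree sum alone does not suffice, and more work is needed. The likely route is to split $V$ into a bounded core $W$ and the remainder $U=V\setminus W$, where $W$ is the union of witness sets or the high-degree vertices. One then shows that a vertex of $U$ of degree close to $p-2$ must be adjacent to almost all of a $(p-2)$-set, via saturation applied to pairs in $U$. Comparing $\sum d(v)$ against $2e(G_0)\approx 2(p-2)n$ then works as follows. Vertices in $U$ each contribute at least $p-1$. The $p-2$ "hub" vertices would need degree about $n$ each. Without a full hub, the adjacent edges cannot be saturated cheaply. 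This is the main obstacle, and the threshold $n>3(p-2)+(q+r)(q+r+1)$ should enter exactly here when bounding the core.

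\textbf{Structure when $d(v)=p-2$.} Take $v$ with $d(v)=p-2$ and set $M=N(v)$. For any $u\notin N[v]$, the copy of $K_p$ (or the larger cliques) in $G+uv$ that contains $uv$ needs $v$ to have $p-1$ neighbours, so it uses all of $M\cup\{u\}$ and lies in the clique $K_p$. Therefore $M\cup\{u\}$ is a clique. Since $n$ is large, such a $u$ exists, so $G[M]\cong K_{p-2}$. Moreover every $u\notin N[v]$ is complete to $M$, and $v$ itself is complete to $M$ as well. Hence $G[M,V\setminus M]$ is complete. Counting then gives
\[
e(G)=\binom{p-2}{2}+(p-2)(n-p+2)+e(G[V\setminus M]),
\]
and the hypothesis $e(G)\le e(G_0)$ yields $e(G[V\setminus M])\le\binom{q+r+1}{2}$.

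The hard step is the second one, excluding $\delta\ge p-1$. My first attempt there would be the following. Use the saturation argument above to find a bounded set $S$ spanning near-cliques. Show that every vertex outside $S$ whose degree is below $p-2+\binom{q+r+1}{2}\cdot\frac{2}{n}$ must be completely joined to a fixed $(p-1)$-set. Deduce that $e(G)\ge (p-1)(n-|S|)$, which exceeds $e(G_0)$ for $n$ beyond the stated bound.
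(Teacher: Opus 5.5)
\textbf{Assessment: there is a genuine gap.} Your first and third steps are correct and match the paper's argument. The key fact is that the clique through $uv$ in $G+uv$ has at least $p$ vertices, and all of its other vertices are common neighbours of $u$ and $v$ in $G$.

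The gap is the second step, which is the real content of the lemma: you never exclude $\delta(G)\ge p-1$. What you give there is a list of expectations, and the one concrete plan is vacuous. Since $n>(q+r)(q+r+1)$, your threshold $p-2+\binom{q+r+1}{2}\cdot\frac{2}{n}$ lies below $p-1$. So under the assumption $\delta(G)\ge p-1$ no vertex has degree below it, and that statement says nothing. The inequality $e(G)\ge (p-1)(n-|S|)$ is then not supported by anything you establish. The bounded-core and ``hub'' picture is also aimed at the wrong structure; no witness set $S$ is needed.

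The missing idea is to reuse your own saturation observation at a vertex $v$ of minimum degree $\delta\ge p-1$. Every $u\notin N[v]$ has at least $p-2$ neighbours in $N(v)$, and these form a clique, so $G[N(v)]$ contains a $K_{p-2}$. Now split the degree sum between $N(v)$ and $V(G)\setminus N(v)$:
\begin{itemize}
\item the $n-\delta$ vertices outside $N(v)$ each have degree at least $\delta$;
\item the vertices of $N(v)$ receive at least $(p-2)(n-1-\delta)$ edges from $V(G)\setminus N[v]$ and $\delta$ edges from $v$;
\item the $K_{p-2}$ inside $N(v)$ contributes a further $2\binom{p-2}{2}$ to the degree sum.
\end{itemize}
Hence
\[
2e(G)\ \ge\ (n-\delta)\delta+(p-2)(n-1-\delta)+\delta+2\binom{p-2}{2}.
\]

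You were right that degrees alone give only about $(p-1)n$. The point is that the $p-2$ edges each vertex of $V(G)\setminus N[v]$ sends into $N(v)$ are counted again at their endpoints in $N(v)$. This lifts the total to about $(2p-3)n$, against $2e(G_0)\approx(2p-4)n$.

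At $\delta=p-1$, the right-hand side minus $2e(G_0)$ is exactly $n-3(p-2)-(q+r)(q+r+1)>0$. This is precisely where the hypothesis on $n$ enters. The right-hand side is increasing in $\delta$ for $\delta<(n-p+2)/2$. For larger $\delta$, the trivial bound $2e(G)\ge n\delta$ is already far above $2e(G_0)$. This is the Claim 2.2 argument of Faudree, Ferrara, Gould and Jacobson that the paper invokes.
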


\begin{proof}
It is clear that $\delta(G) \geq p-2$, otherwise $G$ is not $K_p\cup K_q\cup K_r$-saturated.
If $\delta(G) \geq p-1$, adapting the same argument of Claim 2.2 in \cite{FFGJ} we have
\[
2e(G)\ge (n-p+1)(p-1)+(n-p)(p-2)+2\binom{p-2}{2}+(p-1) > 2e(G_0),
\]
a contradiction.
Thus $\delta(G)=p-2$. Let $v \in V(G)$ has minimum degree and $M=N(v)$. For any $ u \not \in N[v]$, $G+uv$ creates a new copy of $K_p\cup K_q\cup K_r$ where $uv$ can only belongs to $K_p$. So $u$ is completely joind to $M$ and $G[M] \cong K_{p-2}$, which establishes the structural claim. The edge hypothesis forces
$e\bigl(G[V(G)\setminus M]\bigr)\le\binom{q+r+1}{2}$.
\end{proof}

\begin{lemma}\label{lem:separation}
Let $k \geq 0$ and $F_1,\ldots,F_m$ be finite sets.
Suppose that for every $i\in[m]$ there are $R_i\subseteq F_i$, $|R_i|=k$ and a red-blue colouring of $U\setminus R_i$ where $U=\bigcup_{i=1}^{m}F_i$ such that
\begin{enumerate}[label=\textup{(\alph*)}]
\item $F_i\setminus R_i$ contains both colours;
\item $F_j\setminus R_i$ is monochromatic for every $j\ne i$.
\end{enumerate}
Then $|U|\ge m+k+1.$
\end{lemma}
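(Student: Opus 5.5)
The plan is to argue by strong induction on $m$, using the colouring attached to the \emph{last} set $F_m$ to split the remaining sets into two groups, and applying the induction hypothesis to each group separately.

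\emph{Base case and restriction.} For $m=1$, condition (a) says $F_1\setminus R_1$ contains a red and a blue element. Hence $|U|=|F_1|\ge |R_1|+2=k+2=m+k+1$.

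Next observe that the hypothesis is hereditary. Let $J\subseteq[m]$ be nonempty and $U_J=\bigcup_{j\in J}F_j$. For each $i\in J$, restrict the colouring of $U\setminus R_i$ to $U_J\setminus R_i$. Conditions (a) and (b) only concern the sets $F_j$, so they still hold for the family $\{F_j\}_{j\in J}$. Therefore the induction hypothesis gives $|U_J|\ge |J|+k+1$.

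\emph{Inductive step.} Let $m\ge 2$ and consider the colouring of $U\setminus R_m$.
\begin{enumerate}[label=\textup{(\roman*)}]
\item No $F_j$ with $j\ne m$ is contained in $R_m$. Indeed $|F_j|\ge k+2$ by (a) applied to $j$, whereas $|R_m|=k$.
\item Hence, by (b), each $F_j\setminus R_m$ with $j\ne m$ is nonempty and monochromatic. This partitions $[m-1]=A\cup B$, where $A$ collects the $j$ with $F_j\setminus R_m$ red and $B$ those with it blue.
\item Write $U_A=\bigcup_{j\in A}F_j$ and $U_B=\bigcup_{j\in B}F_j$. Elements of $U_A$ outside $R_m$ are red and elements of $U_B$ outside $R_m$ are blue, so $U_A\cap U_B\subseteq R_m$ and $|U_A\cap U_B|\le k$.
\end{enumerate}

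\emph{Both groups nonempty.} Apply the hereditary induction hypothesis to $A$ and to $B$. Inclusion–exclusion then gives
\[
|U|\ge |U_A\cup U_B|\ge (|A|+k+1)+(|B|+k+1)-k=m+k+1 .
\]

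\emph{One group empty.} Say $B=\emptyset$, so $A=[m-1]$. By (a), $F_m\setminus R_m$ contains a blue element $y$. This $y$ lies outside $R_m$ and is blue, so $y\notin U_A$. Hence
\[
|U|\ge |U_A|+1\ge (m-1+k+1)+1=m+k+1 .
\]
The case $A=\emptyset$ is symmetric, using a red element of $F_m\setminus R_m$.

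The only delicate point is step (i), that no $F_j$ is swallowed by $R_m$. It is exactly what guarantees every set falls into one of the two colour classes. It is also what makes the intersection of the two groups small, at most $k$, so that the inclusion–exclusion count gains the needed amount.
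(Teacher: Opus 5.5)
Your proof is correct and follows essentially the same route as the paper. Both argue by induction on $m$: split $[m-1]$ by the colour of $F_j\setminus R_m$ under the colouring attached to $F_m$, apply induction to each class (whose unions meet only inside $R_m$), and in the one-class-empty case gain an extra element of the missing colour from $F_m\setminus R_m$. Your explicit checks that the hypotheses pass to subfamilies and that $F_j\setminus R_m\neq\varnothing$ (so the partition is well defined) are details the paper leaves implicit.
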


\begin{proof}
We use induction on $m$.
The hypothese associated with $F_i$ for $i\in[m]$ gives $|F_i|\ge k+2$.
For $m=1$, $|U|=|F_1|\ge k+2=m+k+1$.

Suppose $m\ge2$, and consider the colouring associated with $F_m$.
Let $X$ and $Y$ be its red and blue classes of $U\setminus R_m$.
Partition $[m-1]$ into $I_X=\{j<m:F_j\setminus R_m\subseteq X\}$ and $I_Y=\{j<m:F_j\setminus R_m\subseteq Y\}$.
Write $a=|I_X|$ and $b=|I_Y|$, so $a+b=m-1$.

If both classes are nonempty, each subfamily inherits the hypotheses of the lemma.
By induction, $\left|\bigcup\limits_{j\in I_X}F_j\right|\ge a+k+1$ and $\left|\bigcup\limits_{j\in I_Y}F_j\right|\ge b+k+1$.
The two unions intersect only inside $R_m$, so their intersection has size at most $k$.
Therefore $|U|\ge(a+k+1)+(b+k+1)-k=m+k+1.$
Otherwise we can assume $I_Y=\varnothing$, induction applied to $F_1,\ldots,F_{m-1}$ gives $\left|\bigcup_{j<m}F_j\right|\ge m+k.$
Choose a blue element $y\in F_m\setminus R_m$, then $y \not \in \bigcup_{j<m}F_j$ and hence $|U|\ge m+k+1$. This complete the proof.
\end{proof}

\section{The lower bound}

Let $G$ be an minimal $K_p\cup K_q\cup K_r$-saturated graph satisfying the hypotheses of Theorem~\ref{thm:main}.
Then Lemma~\ref{lem:min-degree} applies. Partition $V(G)$ into three sets $M, H, I$, where $M \cong K_{p-2}$ such that the induced bipartite graph $G[M,H \cup I]$ is complete and $I$ is the set of all isolated vertices in $G[H \cup I]$. Since $\delta(G) = p-2$, we have $I\ne\varnothing$ and we can fix $v \in I$. Furthermore, $e(H) \le\binom{q+r+1}{2}$ and $H$ is non-empty.

\begin{lemma}\label{lem:core}
The graph $H$ has the following properties.
\begin{enumerate}
\item $H$ contains no pairwise vertex-disjoint copies of $K_2,K_q,K_r$.
\item For every $z\in V(H)$, the graph $H-z$ contains vertex-disjoint copies of $K_q$ and $K_r$.
\end{enumerate}
\end{lemma}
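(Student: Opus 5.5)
Both properties follow by transporting cliques between $H$ and $G$ through the complete join with $M\cong K_{p-2}$ given by Lemma~\ref{lem:min-degree}; the only other input is that vertices of $I$ have no neighbours in $H\cup I$.

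For property 1, suppose $H$ contained vertex-disjoint copies of $K_2$, $K_q$ and $K_r$. Since $M$ is a clique of order $p-2$ completely joined to $H\cup I$, the edge $K_2$ together with $M$ spans a $K_p$ in $G$. This $K_p$ is disjoint from the $K_q$ and $K_r$ inside $H$, so $G$ contains $K_p\cup K_q\cup K_r$. That contradicts $G$ being $K_p\cup K_q\cup K_r$-free.

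For property 2, fix $z\in V(H)$ and the vertex $v\in I$, and consider the non-edge $zv$ (it is missing because $v$ is isolated in $G[H\cup I]$). Saturation gives a copy $K_p\cup K_q\cup K_r$ in $G+zv$ that uses $zv$. Since $N_G(v)=M$, the clique containing $zv$ has all its other vertices in $M\cup\{z\}$, so it has at most $p$ vertices. As $p\le q<r$, this clique must be the $K_p$, and its vertex set is exactly $M\cup\{v,z\}$. The remaining copies $K_q$ and $K_r$ are then vertex-disjoint and avoid $M\cup\{v,z\}$, so they lie in $G[(H\cup I)\setminus\{z,v\}]$.

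It remains to show these two cliques lie in $H-z$. Any vertex of $I$ is isolated in $G[H\cup I]$. Since $q\ge 2$, every vertex of a $K_q$ or $K_r$ has a neighbour inside its clique, so no vertex of $I$ can belong to either clique. Hence both cliques lie in $H-z$, which proves property 2.

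The only delicate point is the bookkeeping in property 2: checking that $zv$ can only sit in the $K_p$, and hence that the other two cliques avoid $M$ entirely. The degree argument above, together with $q\ge p$, handles this.
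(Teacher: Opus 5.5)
Your proof is correct and follows the same approach as the paper. For (i) you extend the edge by $M$ to a $K_p$; for (ii) you add the non-edge $vz$ with $v\in I$, note that it must lie in a $K_p$ on exactly $M\cup\{v,z\}$, and exclude vertices of $I$ from the remaining $K_q$ and $K_r$ because they are isolated in $G[H\cup I]$ — the paper does the same, with your use of $N_G(v)=M$ just spelling out its phrase ``because $p\le q<r$''.
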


\begin{proof}
If an edge $xy$, a $K_q$, and a $K_r$ were pairwise vertex-disjoint in $H$, then together with $M$ they would form a copy of $K_p\cup K_q\cup K_r$ in $G$, proving (i).

For (ii), since $G+vz$ creates a new copy of $K_p\cup K_q\cup K_r$, the edge $vz$ must belong to $K_p$ because $p\le q<r$. 
Therefore the remaining two components form disjoint $K_q,K_r$ in $H \cup I - \{v,z\}$, and all their vertices belong to $H$.
\end{proof}

\begin{prop}\label{prop:qplus}
If $H$ contains vertex-disjoint cliques
$A\cong K_{q+1}$ and $B\cong K_r$, then $H\cong K_{q+r+1}$.
\end{prop}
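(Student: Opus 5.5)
The plan is to show two things: every vertex of $H$ lies in $A\cup B$, and $H[A\cup B]$ is complete. Together these give $H\cong K_{q+r+1}$.

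\textbf{Step 1 (vertices outside $A\cup B$ only see $B$).} Let $x\in V(H)\setminus(A\cup B)$. For any $a\in A$, the sets $A-a\cong K_q$ and $B\cong K_r$ are disjoint cliques avoiding $x$ and $a$. So, by Lemma~\ref{lem:core}(i), $xa\notin E(H)$. For the same reason, no edge of $H$ can have both ends outside $(A-a)\cup B$. Hence every edge at an outside vertex goes into $B$, and the outside vertices form an independent set. Each such $x$ is non-isolated in $H$ by the definition of $I$, so it has at least one neighbour in $B$.

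\textbf{Step 2 (ruling out outside vertices).} This is the step I expect to be the main obstacle. Suppose $x$ is an outside vertex with a neighbour $b\in B$. I would first apply Lemma~\ref{lem:core}(ii) with $z$ chosen in $B$ (or $z=b$). Any clique containing $x$ lies in $B\cup\{x\}$. The disjoint $K_q$ and $K_r$ in $H-z$ must therefore be assembled mostly from $A\cup B$, and this should force many edges between $A$ and $B$. I would then combine this with an edge count:
\[
e(H)\ge \binom{q+1}{2}+\binom{r}{2}+e_H(A,B)+\sum_{x\notin A\cup B}d_H(x).
\]
Since $\binom{q+r+1}{2}=\binom{q+1}{2}+\binom r2+(q+1)r$, the bound $e(H)\le\binom{q+r+1}{2}$ from Lemma~\ref{lem:min-degree} means the outside edges are paid for by missing $A$--$B$ pairs. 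The aim is to show that the cross edges forced by (ii), together with at least one edge per outside vertex, exceed this budget. If pure counting falls short, the fallback is to look for a forbidden configuration for Lemma~\ref{lem:core}(i): the edge $xb$, the clique $A-a$, and a $K_r$ inside $(B-b)\cup\{a'\}$ for some $a'\in A$ adjacent to all of $B-b$. The existence of such an $a'$ would come from the cross edges forced by (ii).

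\textbf{Step 3 (completeness of $H=A\cup B$).} Once $V(H)=A\cup B$, we have $|M\cup H|=p+q+r-1$. Suppose some pair $ab$ with $a,b\in H$ is not an edge. Then $G+ab$ must contain a copy of $K_p\cup K_q\cup K_r$, which needs $p+q+r$ vertices, so at least one vertex $w\in I$ is used. But $N(w)=M$, so any clique through $w$ lies in $M\cup\{w\}$ and has at most $p-1<p\le q<r$ vertices. This is impossible, so $G+ab$ has no copy, contradicting saturation. Hence $H$ is complete on $q+r+1$ vertices, i.e.\ $H\cong K_{q+r+1}$.
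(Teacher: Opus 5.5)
Your Steps 1 and 3 are sound. Step 1 is exactly how the paper starts, and Step 3 is a direct version of the paper's remark that $G\subseteq G_0$ forces $G=G_0$. Step 2, however, is a genuine gap, and the route you sketch for it cannot be completed.

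Everything you plan to use in Step 2 concerns $H$ alone: Lemma~\ref{lem:core}(i) and (ii), non-isolation of the vertices of $H$, and $e(H)\le\binom{q+r+1}{2}$. None of it involves $M$ or the hypothesis $r<p+q$. For $r\ge q+2$ (allowed by the theorem as soon as $p\ge 3$), the graph $H=K_{q+1}\cup K_{r+1}$ satisfies all of these facts. Take $A$ to be the $K_{q+1}$, $B$ any $r$ vertices of the $K_{r+1}$, and $x$ the remaining vertex, which is adjacent to all of $B$. Properties (i) and (ii) hold, and $\binom{q+1}{2}+\binom{r+1}{2}\le\binom{q+r+1}{2}$, yet $H$ is not complete. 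This is exactly the $H$-part of the extremal graph $H_3(n;p,q,r)$ when $r\ge p+q$.

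In particular, your heuristic that (ii) forces many $A$--$B$ edges is false. For $z=b\in B$, the graph $H-b$ contains the disjoint cliques $A$ (which contains a $K_q$) and $(B-b)\cup\{x\}\cong K_r$, with no cross edges at all. So neither the counting nor the vertex $a'$ of your fallback can be produced. For $r=q+1$ you could finish by rerunning Step 1 with the roles of $A$ and $B$ swapped, which makes $x$ isolated, but that does not cover the general case.

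The missing idea is to use the clique $M\cong K_{p-2}$, which is completely joined to $H$, together with $r<p+q$. Suppose some $x\notin A\cup B$ had $d_B(x)\ge r-q$. Choose $T\subseteq N_B(x)$ with $|T|=r-q$, and split $M=M_1\cup M_2$ with $|M_1|=p-(r-q)-1\ge 0$ (this is where $r<p+q$ enters) and $|M_2|=r-q-1$. Then $M_1\cup\{x\}\cup T\cong K_p$, $B\setminus T\cong K_q$ and $M_2\cup A\cong K_r$ are disjoint cliques in $G$, a contradiction.

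Hence $d_H(x)=d_B(x)\le r-q-1\le p-2<q-1$, so $x$ lies in no $K_q$ or $K_r$ of $H$. Apply Lemma~\ref{lem:core}(ii) at a neighbour $b\in B$ of $x$: it gives disjoint $K_q$ and $K_r$ in $H-b$ that avoid $x$. Together with the edge $xb$ they contradict Lemma~\ref{lem:core}(i). So no vertex lies outside $A\cup B$, and your Step 3 then completes the proof.
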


\begin{proof}
Let $X=V(H)\setminus(A\cup B).$ If $X \neq \varnothing$, then by Lemma~\ref{lem:core}(i), $E(X,A)=\varnothing$ and $X$ is independent.

We claim that every $x\in X$ satisfies $d_B(x)\le r-q-1$.
Otherwise choose $T\subseteq N_B(x)$ with $|T|=r-q$, and partition $M=M_1\cup M_2$ where $|M_1|=p-(r-q)-1$ and $|M_2|=r-q-1$.
Then $M_1\cup\{x\}\cup T\cong K_p$, $B\setminus T\cong K_q$, and $M_2\cup A\cong K_r$ are disjoint cliques in $G$, a contradiction.

Since $r<p+q$ and $p\le q$, $d_H(x)=d_B(x)\le r-q-1\le p-2<q-1.$
Suppose that $xb\in E(H)$ for some $b\in B$.
Lemma~\ref{lem:core}(ii), applied to $b$, gives disjoint cliques $Q\cong K_q$ and $R\cong K_r$ in $H-b$.
By $d_H(x)<q-1$, $x$ belongs to neither $Q$ nor $R$.
Thus the edge $xb$ is disjoint from $Q\cup R$, contrary to Lemma~\ref{lem:core}(i). Hence $E(X,B)=\varnothing$ and $X=\varnothing$.
Therefore $|V(H)|=q+r+1$. This means that $G$ is a subgraph of $G_0$. Since $G_0$ is $K_p\cup K_q\cup K_r$-saturated, we have $G=G_0$ and $H\cong K_{q+r+1}$.
\end{proof}

\begin{prop}\label{prop:rplus}
If $H$ contains vertex-disjoint cliques $A\cong K_q$ and $B\cong K_{r+1}$, then $H\cong K_{q+r+1}$.
\end{prop}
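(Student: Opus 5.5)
The plan follows the proof of Proposition~\ref{prop:qplus}. Let $X=V(H)\setminus(A\cup B)$. If $X=\varnothing$, then $|V(H)|=q+r+1$, so $G$ is a subgraph of $G_0$. Since $G_0$ is $K_p\cup K_q\cup K_r$-saturated, this forces $G=G_0$, as at the end of Proposition~\ref{prop:qplus}. So assume $X\neq\varnothing$.

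First I pin down where the edges touching $X$ can go, using Lemma~\ref{lem:core}(i).
\begin{itemize}
\item An edge inside $X$, together with $A\cong K_q$ and any $K_r\subseteq B$, gives three pairwise disjoint copies of $K_2,K_q,K_r$. Hence $X$ is independent.
\item An edge $xb$ with $x\in X$, $b\in B$, together with $A$ and $B-b\cong K_r$, gives the same contradiction. Hence $E(X,B)=\varnothing$.
\end{itemize}
So every $x\in X$ has $N_H(x)\subseteq A$. Vertices of $X$ are not isolated in $H$, since isolated vertices of $G[H\cup I]$ were put into $I$. Hence some $x\in X$ has a neighbour $a\in A$.

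Next I bound $d_A(x)$ from below by the degree trick of Proposition~\ref{prop:qplus}. Lemma~\ref{lem:core}(ii) applied to $z=a$ gives disjoint $Q\cong K_q$ and $R\cong K_r$ in $H-a$. If $d_H(x)\le q-2$, then $x$ lies in neither $Q$ nor $R$, because a vertex of $Q$ or $R$ has degree at least $q-1$ in $H-a$. The edge $xa$ would then be disjoint from $Q\cup R$, contradicting Lemma~\ref{lem:core}(i). So $d_A(x)\in\{q-1,q\}$, and there are two cases.
\begin{itemize}
\item If $d_A(x)=q$, then $A\cup\{x\}\cong K_{q+1}$ is disjoint from any $K_r\subseteq B$. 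Proposition~\ref{prop:qplus} then yields $H\cong K_{q+r+1}$, which contradicts $X\ne\varnothing$ (equivalently, this is exactly the desired conclusion).
\item If $d_A(x)=q-1$, let $a'$ be the unique non-neighbour of $x$ in $A$, and set $A'=(A-a')\cup\{x\}\cong K_q$.
\end{itemize}

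The main obstacle is the case $d_A(x)=q-1$. In it, $(A',B)$ again satisfies the hypotheses of the proposition, with the leftover vertex now $a'$. By the first step applied to $(A',B)$, $a'$ has no neighbour in $B$ and its neighbourhood lies in $A'$; since $a'x\notin E$, in fact $N_H(a')\subseteq A-a'$. The idea is to keep swapping until the configuration becomes rigid.

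The first thing I would try is to let $W\subseteq A\cup X$ be the component structure spanned by $A$ and the vertices of $X$ attached to it. Every vertex of $W$ misses at most one vertex of $A$, and $W$ is separated from $B$. I then apply Lemma~\ref{lem:core}(ii) to a vertex $z\in A$ adjacent to two different vertices $x_1,x_2\in X$ with distinct non-neighbours in $A$ (or to $z=a'$ itself). In $H-z$ the disjoint $K_q$ and $K_r$ cannot use $B$ entirely together with a clique of $W-z$ unless $W-z$ still contains a $K_q$ disjoint from an edge, again contradicting Lemma~\ref{lem:core}(i).

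If this direct route fails, I would encode the constraints as in Lemma~\ref{lem:separation}. Let the sets $F_i$ be the sets $A\cup\{x\}\setminus\{a'_x\}$, and let the colourings be induced by the $K_q/K_r$ splits from Lemma~\ref{lem:core}(ii). The lemma then forces $|A\cup X|$ to be large. Combined with the edge bound $e(H)\le\binom{q+r+1}{2}$, since each $x\in X$ contributes $q-1$ extra edges beyond those of $K_q\cup K_{r+1}$, this should give a contradiction and finish the proof.
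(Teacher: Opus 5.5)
Your setup is fine: $X$ is independent, $E(X,B)=\varnothing$, and every $x\in X$ has a neighbour $a\in A$. Your treatment of the case $d_A(x)=q$ via Proposition~\ref{prop:qplus} is also fine. The proof is nevertheless incomplete, because the case $d_A(x)=q-1$, which you call the main obstacle, is never closed. The swapping idea, the ``component structure'' $W$, and the appeal to Lemma~\ref{lem:separation} together with the edge bound are only sketches that end in ``should give a contradiction''. The edge count in particular does not help as stated. Since $\binom{q+r+1}{2}-\binom{q}{2}-\binom{r+1}{2}=q(r+1)$, you would need $|X|(q-1)>q(r+1)$, and nothing you have shown forces this.

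The case $d_A(x)=q-1$ is spurious; it comes from a miscount in your degree step. What matters is the degree of $x$ in $H-a$, not in $H$. Since $a$ is itself a neighbour of $x$ and $N_H(x)\subseteq A$, we have $d_{H-a}(x)=d_A(x)-1$. So $d_A(x)\le q-1$ already gives $d_{H-a}(x)\le q-2$. Every vertex of $Q$ or $R$ has degree at least $q-1$ in $H-a$, so $x\notin Q\cup R$. The edge $xa$ is then disjoint from $Q\cup R$, contradicting Lemma~\ref{lem:core}(i). This is exactly the paper's argument, and it forces $d_A(x)=q$.

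Your $K_{q+1}$ case then finishes the proof. The paper instead builds the forbidden cliques directly: $M_2\cup\{x\}\cup A\cong K_r$ and $M_1\cup B\cong K_{p+q}$, with $|M_1|=p-(r-q)-1$ and $|M_2|=r-q-1$. With the one-line correction, your use of Proposition~\ref{prop:qplus} is a valid alternative to that construction.
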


\begin{proof}
Let $X=V(H)\setminus(A\cup B).$ If $X \neq \varnothing$, then by Lemma~\ref{lem:core}(i), $E(X,B)=\varnothing$ and $X$ is independent.

Let $a\in A\cap N_H(x)$.
If $d_A(x)\le q-1$, then we have $d_{H-a}(x)=d_A(x)-1\le q-2$.
By Lemma~\ref{lem:core}(ii), $H-a$ contains disjoint $Q\cong K_q$ and $R\cong K_r$.
The degree bound shows that $x$ belongs to neither clique.
Hence the edge $xa$ is disjoint from $Q\cup R$, contrary to Lemma~\ref{lem:core}(i). Therefore $d_A(x)=q$. 

Partition $M=M_1\cup M_2$ where $|M_1|=p-(r-q)-1$ and $|M_2|=r-q-1$.
Then $M_2\cup\{x\}\cup A\cong K_r$ and $M_1\cup B\cong K_{p+q}$ are disjoint cliques in $G$, a contradiction.
Consequently $X=\varnothing$ and $|V(H)|=q+r+1$. The same argument as in the proof of Proposition~\ref{prop:qplus} shows that $H$ is complete.
\end{proof}

\begin{prop}\label{prop:nonexistence}
Let $q<r$, and let $H$ be a nonempty graph such that
\begin{enumerate}
\item $H$ contains no pairwise vertex-disjoint $K_2,K_q,K_r$;
\item $H$ contains no vertex-disjoint $K_{q+1},K_r$;
\item $H$ contains no vertex-disjoint $K_q,K_{r+1}$;
\item for every $z\in V(H)$, the graph $H-z$ contains vertex-disjoint $K_q,K_r$.
\end{enumerate}
Then no such graph $H$ exists.
\end{prop}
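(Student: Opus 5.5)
The plan is to argue by contradiction, fixing one "reference" pair and controlling the rest of $H$ around it. By (iv), $H$ contains vertex-disjoint $Q\cong K_q$ and $R\cong K_r$ (take the pair for any $z$). Put $S=V(Q)\cup V(R)$, so $|S|=q+r$, and $W=V(H)\setminus S$.

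First I would record the local consequences of (i)--(iii) for this pair.
\begin{itemize}
\item By (i), every edge of $H$ meets $S$, so $W$ is independent.
\item By (ii), no $w\in W$ is complete to $Q$, since otherwise $Q+w\cong K_{q+1}$ is disjoint from $R$.
\item By (iii), no $w\in W$ is complete to $R$.
\item Every $w\in W$ has degree at least $1$. Otherwise (iv) applied to any $z$ gives a pair, and we are still fine, but $w$ would be isolated in $H$; I would check whether isolated vertices must be excluded or are harmless. They never lie in a $K_q$ with $q\ge2$, so they can simply be deleted without affecting (i)--(iv).
\end{itemize}

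Next, for each $z\in S$ take the pair $(Q_z,R_z)$ from (iv) avoiding $z$. Since $W$ is independent, each of $Q_z$ and $R_z$ contains at most one vertex of $W$. The other vertices of that clique lie in $S$, and they must be adjacent to that $W$-vertex. The pair $(Q_z,R_z)$ therefore "re-uses" at least $q+r-2$ vertices of $S\setminus\{z\}$. I would then compare $(Q_z,R_z)$ with $(Q,R)$.

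The aim is to show that the traces $Q_z\cap S$ and $R_z\cap S$ split the cliques of $H[S]$ consistently. This is where Lemma~\ref{lem:separation} enters. Its parameters are as follows.
\begin{itemize}
\item The sets $F_i$ are the relevant cliques, namely the vertex sets of $Q,R$ and of the cliques $Q_z,R_z$ intersected with $S$.
\item The red/blue colouring of $S$ associated with a given $z$ comes from "which of $Q_z$, $R_z$ the vertex joins". A vertex adjacent to the $W$-vertex of $R_z$ would create $K_{r+1}$ or $K_{q+1}$ configurations forbidden by (ii) or (iii).
\item The sets $R_i$ are the $k$ vertices that are forced out, essentially $z$ together with the vertices displaced by $W$-vertices.
\end{itemize}
Conditions (a) and (b) of the lemma should follow from (ii) and (iii): a clique that is not monochromatic in the wrong place would combine with a $W$-vertex into a forbidden $K_{q+1}\cup K_r$ or $K_q\cup K_{r+1}$. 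The lemma then yields $|S|\ge m+k+1$ with $m$ large, namely one set per vertex of $S$, which contradicts $|S|=q+r$.

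The main obstacle is making the colouring and the sets $F_i$, $R_i$ precise so that hypothesis (b) of Lemma~\ref{lem:separation} genuinely holds for every $j\ne i$. The difficulty is that the pairs $(Q_z,R_z)$ for different $z$ can overlap $S$ in complicated ways.

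If this fails, my first fallback is a cruder swap argument. Take $z\in Q$ (or in $R$). The new pair $(Q_z,R_z)$ must use a vertex $w\in W$, because $H[S]-z$ has only $q+r-1$ vertices. That $w$ has at least $q-1$ neighbours in $S$ (or $r-1$ if it lies in $R_z$) but is complete to neither $Q$ nor $R$. Since $r<p+q$ is not assumed here, only $q<r$, I would use the size gap to show that $w$ together with its neighbourhood in $S$ and the leftover part of $S$ contains vertex-disjoint $K_q$, $K_{r+1}$ or $K_{q+1}$, $K_r$, or an extra disjoint edge. This contradicts (i), (ii) or (iii).
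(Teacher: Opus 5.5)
\textbf{There is a genuine gap.} The decisive step, applying Lemma~\ref{lem:separation}, is never made concrete, and the version you sketch cannot work.

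\emph{Why the main plan fails.} The hypothesis $q<r$ is indispensable. For $q=r=2$ the $5$-cycle satisfies (i)--(iv): it has no three disjoint edges, it has no triangle, and $C_5-z\cong P_4$ contains $2K_2$. Your intended contradiction is $|S|\ge m+k+1>q+r=|S|$, with one set per vertex of $S$ and conditions (a), (b) justified only by (ii) and (iii). That argument is symmetric in $Q$ and $R$ and never uses $q<r$, so it would also rule out $C_5$.

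The chosen objects are also wrong.
\begin{itemize}
\item If $F_z$ is a clique trace such as $Q_z\cap S$, it is monochromatic under its own colouring, so (a) fails.
\item If you enlarge it to $(Q_z\cup R_z)\cap S$, then (b) fails in general, because another pair meets both cliques of the pair for $z$.
\item Indexing by all of $S$ also breaks (b). A vertex $z'$ not adjacent to $z$ may lie in the leftover set of the pair for $z$, and then nothing confines $F_{z'}$ to one colour.
\end{itemize}
Your fallback swap argument is only a hope: no forbidden configuration is actually exhibited.

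\emph{The missing idea.} You need a different choice of sets and a different count.
\begin{itemize}
\item Fix a single clique $B\cong K_r$. For $b\in B$, let $F_b$ be the set of \emph{non-neighbours} of $b$, rather than a clique.
\item Let $(P_b,Q_b)$ be the pair given by (iv) in $H-b$, and put $R_b=V(H)\setminus(P_b\cup Q_b\cup\{b\})$. Property (i) forces $\{b\}\cup R_b$ to be independent, so $R_b\subseteq F_b$ with $|R_b|=h-q-r-1$.
\item Properties (ii) and (iii) say exactly that $F_b$ meets both $P_b$ and $Q_b$. This is condition (a) when $P_b$ is coloured red and $Q_b$ blue.
\item Condition (b) holds because the index set is a clique. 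Every $c\in B\setminus\{b\}$ is adjacent to $b$, so $c\notin R_b$ and hence $c\in P_b\cup Q_b$. Since $c$ is adjacent to its whole clique and to $b$, the set $F_c\setminus R_b$ lies entirely in the other clique.
\item Lemma~\ref{lem:separation} with $m=r$ and $k=h-q-r-1$ gives $|U|\ge h-q$ for $U=\bigcup_{b\in B}F_b$. Since $B$ is a clique, $U$ avoids $B$, so $|U|\le h-r$.
\end{itemize}
These two bounds contradict each other precisely because $q<r$, which is the use of the size gap your sketch lacks.
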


\begin{proof}
Write $h=|V(H)|$.
Property (iv) implies $h\ge q+r+1$ and gives a clique $B\cong K_r$ in $H$.
For each $b\in B$, define $F_b=\{x\in V(H)\setminus\{b\}: bx\notin E(H)\}$ and $U=\bigcup_{b\in B}F_b$.
Since $B$ is a clique, we have $U\subseteq V(H)\setminus B$ and $|U|\le h-r$.

Fix $b\in B$.
By (iv), $H-b$ contains disjoint cliques $P_b\cong K_q$ and $Q_b\cong K_r$.
Let $R_b=V(H)\setminus\bigl(P_b\cup Q_b\cup\{b\}\bigr)$, then $|R_b|=h-q-r-1=:k$.
By (i), $\{b\}\cup R_b$ is independent. Hence $R_b\subseteq F_b$.
Moreover, $F_b\cap P_b\ne\varnothing$, otherwise $P_b\cup\{b\}\cong K_{q+1}$ would be disjoint from $Q_b\cong K_r$, contrary to (ii).
Similarly, $F_b\cap Q_b\ne\varnothing$.

Colour $U\setminus R_b$ red on $P_b$ and blue on $Q_b$.
This is well-defined because $P_b\cup Q_b\cup\{b\}\cup R_b$ is a partition of $V(H)$ and $b\notin U$.
The argument above shows that $F_b\setminus R_b$ contains both colours.

Now let $c\in B\setminus\{b\}$, then $c\in P_b\cup Q_b$.
If $c\in P_b$, then $P_b \cup \{b\} \subseteq N_H[c]$.
Thus $F_c\setminus R_b\subseteq Q_b$.
It is therefore monochromatic blue.
If $c\in Q_b$, the symmetric argument gives
$F_c\setminus R_b\subseteq P_b$, which is monochromatic red.

The family $\{F_b:b\in B\}$ now satisfies Lemma~\ref{lem:separation} by taking $m=r$ and $k=h-q-r-1$. Consequently,
$|U|\ge r+(h-q-r-1)+1=h-q$.
Combining $|U|\ge h-q$ and $|U|\le h-r$ gives $h-q\le h-r$, contrary to $r>q$. This completes the proof.
\end{proof}
\vspace{0.5em}

\noindent \textbf{Proof of Theorem~\ref{thm:main}}.
The construction of $G_0$ gives the desired upper bound. For the lower bound, Lemma~\ref{lem:core} and Proposition~\ref{prop:nonexistence} show that either $H$ contains $K_{q+1}\cup K_r$ or $H$ contains $K_q\cup K_{r+1}$. By Proposition~\ref{prop:qplus} and Proposition~\ref{prop:rplus}, $H\cong K_{q+r+1}$ which forces $G=G_0$. This complete the proof.

\section*{Acknowledgement}

The authors used GPT-5.6 to find the proof of Lemma~\ref{lem:separation}, and for grammar and style checking. 
After using this tool, the authors reviewed and edited the content as needed and take full responsibility for the content of the manuscript.

The research of He was supported by Beijing Natural Science Foundation (No. QG26001),the National Natural Science Foundation of China (No.12401445) and the Talent Fund of Beijing Jiaotong University (No. 2024-003). The research of Xu was supported by China Scholarship Council (No. 202606210184). The research of Lin was supported by the Wuxi University Research Start-up Fund for High-level Talents (No. 550225090).

\end{document}